\documentclass[review]{elsarticle}

\usepackage{hyperref}
\usepackage{amsmath, amssymb,amsthm}
\usepackage{mathrsfs}
\usepackage{stmaryrd}

\date{}









\bibliographystyle{elsarticle-num}

\def\F{\mathbb{F}}
\def\twodim{L}
\def\lbrak{\left[}
\def\rbrak{\right]}
\def\Uenv{\mathscr{U}}
\def\envalg{\Uenv}
\def\envalgDef{\envalg:=\Uenv(\twodim)}
\def\N{\mathbb{N}}
\def\twodimComp{\mathit{\Gamma}}
\def\sub{\subseteq}
\def\arbLie{\mathfrak{g}}
\def\genset{X}
\def\monoid{\left<\genset\right>}
\def\freeAlgFull{\F\monoid}
\def\freeAlg{\mathfrak{A}}
\def\freeAlgDef{\freeAlg:=\freeAlgFull}
\def\algI{1}
\def\freeLie{\mathfrak{L}}

\def\lreg{\llbracket}
\def\rreg{\rrbracket}

\def\regBasis{B}
\def\vuLetter{B}
\def\vmun{\vuLetter_1}
\def\vumvun{C}
\def\vun{D}
\def\arbregsub{S}

\def\setdiff{\backslash}
\def\Zplus{\N\setdiff\{0\}}

\def\infId{\mathcal{A}}
\def\defId{\mathcal{B}}

\def\ad{{\rm ad}\ }
\def\fchar{{\rm char}\ }
\def\span{{\rm Span}_\F\ }

\def\lpar{\left(}
\def\rpar{\right)}

\def\Q{\mathbb{Q}}
\def\spanQ{{\rm Span}_\Q\ }

\def\relcoeff{a}
\def\infgen{\alpha}

\def\ltri{\vartriangleleft}
\def\rtri{\vartriangleright}

\def\endings{E}

\def\uvLie{\mathscr{R}}

\def\arbsubF{\mathcal{K}}

\def\spanArb{{\rm Span}_\arbsubF\ }

\def\xpoly{\F\lbrak x\rbrak}
\def\dirsum{\oplus}

\def\filter{\mathcal{F}}


\newtheorem{theorem}{Theorem}[section]

\newtheorem{lemma}[theorem]{Lemma}
\newtheorem{proposition}[theorem]{Proposition}

\newtheorem{assumption}[theorem]{Assumption}



\theoremstyle{definition}
\newtheorem{definition}[theorem]{Definition}
\newtheorem{remark}[theorem]{Remark}

\makeatletter
\def\ps@pprintTitle{%
 \let\@oddhead\@empty
 \let\@evenhead\@empty
 \def\@oddfoot{}%
 \let\@evenfoot\@oddfoot}
\makeatother

\begin{document}

\begin{frontmatter}

\title{Ubiquitous Lie polynomials in a two-generator universal enveloping algebra}

\author{Rafael Reno S. Cantuba}
\ead{rafael\_cantuba@dlsu.edu.ph}
\address{Mathematics and Statistics Department, De La Salle University, Manila}

\begin{abstract}
The universal enveloping algebra $\envalg$ of a two-dimensional nonabelian Lie algebra $\twodim$ is a Lie algebra itself with the commutator as Lie bracket. There exists a presentation of $\envalg$ with generators $x,y$ and relation $xy-yx=x$ such that the Lie subalgebra of $\envalg$ generated by $x,y$ is isomorphic to $\twodim$, which is only a two-dimensional vector subspace of the infinite-dimensional $\envalg$. Much then of the Lie structure of $\envalg$ is ubiquitous, yet unexamined when the characteristic of the scalar field is zero. In such a case, we show that there exists a linear complement of $\twodim$ in $\envalg$ that contains an infinite-dimensional Lie subalgebra of $\envalg$ for which we give a presentation by generators and relations. We extend this Lie subalgebra into a filtration of $\envalg$.
\end{abstract}

\begin{keyword}
Lie structure \sep universal enveloping algebra\sep Lie polynomial \sep low-dimensional Lie algebra \sep regular words \sep generators and relations \sep filtered Lie algebra
\MSC[2010] 17B60 \sep 17B65 \sep 17B35 \sep 17B01 \sep 17B68 \sep 17B70 \sep 16R99 \sep 16W70
\end{keyword}

\end{frontmatter}


\section{Introduction}

Let $\F$ be a field, and let $\twodim$ be a non-abelian, two-dimensional Lie algebra over $\F$. A well-known result in the theory of low-dimensional Lie algebras is that $\twodim$ has a basis consisting of elements $x$ and $y$ such that
\begin{eqnarray}
\lbrak x,y\rbrak = x.\nonumber
\end{eqnarray}
See, for instance, \cite[Section 3.1]{Erd06}. Let $\envalgDef$ be the universal enveloping algebra of $\twodim$. That is, $\envalg$ is the (unital associative) algebra over $\F$ that has a presentation with generators $x,y$ and relation
\begin{eqnarray}
xy-yx=x.\label{Udefrel}
\end{eqnarray}
We turn $\envalg$ into a Lie algebra over $\F$ with Lie bracket given by $\lbrak f,g\rbrak:=fg-gf$ for any $f,g\in\envalg$. Arguably the most important property of $\envalg$, by the definition of a universal enveloping algebra, is that the Lie subalgebra of $\envalg$ generated by $x,y\in\envalg$ is isomorphic to $\twodim$ \cite[Corollary 17.3B]{hum80}, which justifies our abuse of notation, identifying $x\in\twodim$ with $x\in\envalg$, and $y\in\twodim$ with $y\in\envalg$.
 
Denote the set of all nonnegative integers by $\N$. By rewriting the relation \eqref{Udefrel} into the ``reordering formula''
\begin{eqnarray}
yx = xy -x,\label{xyReorder}
\end{eqnarray}
we can then make use of the \emph{Diamond Lemma for Ring Theory} \cite[Theorem 1.2]{ber78} to deduce that the elements
\begin{eqnarray}
x^ky^l,\quad\quad\quad(k,l\in\N),\label{Ubasis}
\end{eqnarray}
form a basis for $\envalg$. Consequently, if we define $\twodimComp\sub\envalg$ as the $\F$-linear span\textemdash the collection of all \emph{finite} linear combinations with scalar coefficients from $\F$\textemdash of all basis elements in \eqref{Ubasis} satisfying the condition $k+l\neq 1$, then we have the direct sum decomposition
\begin{eqnarray}
\envalg = \twodim\oplus\twodimComp,
\end{eqnarray}
where $\dim_\F \twodim = 2$ and $\dim_\F \twodimComp= \aleph_0=|\N|$. Contemplating on these vector space dimensions, the Lie algebra $\envalg$, primarily motivated by the low-dimensional $\twodim$, is a Lie algebra, much about the Lie algebra structure, or simply \emph{Lie structure}, of which, can still be elucidated by a closer look at $\twodimComp$. 

The Lie structure of a universal enveloping algebra has been studied in \cite{ril93}, which, as indicated in \cite[p.~47]{ril93} is based on results from \cite{pas90,pet91} which characterize restricted Lie algebras\textemdash for which the underlying field has positive characteristic\textemdash whose corresponding universal enveloping algebras satisfy some polynomial identities. The study \cite{pas90} is based on the algebraic techniques developed in \cite{ber90} which are for algebraic structures over fields with positive characteristic, while the study \cite{pet91}, similarly with \cite{ber90}, are motivated in part by \cite{bac74}, which gives some characterization for a restricted Lie algebra to be a \emph{PI-algebra}.

Emphasis is laid, in the aforementioned literature about the Lie structure of a universal enveloping algebra, on the positive characteristic of the underlying field. Even in the literature motivated by \cite{ril93}, such as \cite{alv17,sic07,sic13,sic15,use13}, which even involve other classes of associative algebras, the said type of Lie structure is investigated for the case when the field of scalars is of positive characteristic. In this study, we initiate the study for fields with characteristic zero. We also explore the role, in the study of this kind of Lie structure, of the theory of bases for free Lie algebras, in particular, Shirshov's basis \cite{shi09a,shi09b}.  

If $\arbLie$ is a Lie algebra over $\F$ and if $f_1,f_2,\ldots,f_n\in\arbLie$, we call any element of the Lie subalgebra of $\arbLie$ generated by $f_1,f_2,...,f_n$ as a \emph{Lie polynomial in $f_1,f_2,\ldots,f_n$}. In the Lie algebra $\envalg$, the Lie polynomials in $x,y$, because of the embedding of $\twodim$ into $\envalg$ are, albeit the main motivation for $\envalg$, now a triviality. Any such Lie polynomial is merely $c_1x+c_2y$ for some $c_1,c_2\in\F$. In this work, we turn our attention to the Lie structure of $\envalg$ as can be explored according to the properties of $\twodimComp$. We show that $\twodimComp$ contains an infinite-dimensional Lie subalgebra of $\envalg$ for which we give a presentation by generators and relations, and through which we describe $\envalg$ as a filtered Lie algebra. The Lie algebra $\envalg$, that was constructed from the low-dimensional $\twodim$, has a Lie structure far richer than that used to construct it. The Lie polynomials in $\envalg$ that are outside $\twodim$ form a Lie algebra, the vector space dimension of which, in comparison to that of $\twodim$, differs by an infinity. The former Lie polynomials  are then, with respect to the totality of the entire structure $\envalg$, ubiquitous.

\section{Preliminaries}

Let $X$ be a nonempty finite set. If $w=\left(x_n\right)_{n=1}^k$ is a finite sequence of elements from $\genset$, then we use the notation of writing $w$ as a juxtaposition of the sequence terms with the indices increasing, i.e., $w=x_1x_2\cdots x_k$. In such a case, we call $w$ a \emph{word on} $\genset$, or a \emph{word in} the elements of $\genset$, or simply a \emph{word}. Conversely, whenever we write a juxtaposition $x_1x_2\cdots x_k$ of sequence terms and we refer to it as a word, we always mean $x_i\in\genset$ for any $i\in\{1,2,\ldots,k\}$. We denote by $\monoid$ the set of all words on $\genset$. The set $\monoid$ is a monoid with respect to the operation of \emph{concatenation}, by which we mean the mapping $\monoid\times\monoid\rightarrow\monoid$ given by the rule
\begin{eqnarray}
\left(x_1x_2\cdots x_k,y_1y_2\cdots y_h\right)\mapsto x_1x_2\cdots x_ky_1y_2\cdots y_h.\nonumber
\end{eqnarray} 
With respect to  this monoid structure, the identity element is the empty sequence, which we denote by $\algI$. Let $\freeAlgDef$ be the free (unital associative) algebra over $\F$ generated by $\genset$. The elements of $\monoid$ form a basis for $\freeAlg$, and the multiplication operation in $\freeAlg$ is the natural extention of the concatenation operation in $\monoid$, in which the product of two arbitrary elements of $\freeAlg$ is determined by the product of the basis elements from $\monoid$. The unity element of $\freeAlg$ is the identity element of the monoid $\monoid$.

We turn $\freeAlg$ into a Lie algebra over $\F$ with Lie bracket $\lbrak f,g\rbrak:=fg-gf$ for any $f,g\in\freeAlg$. The Lie subalgebra $\freeLie$ of $\freeAlg$ generated by $\genset$ is the free Lie algebra on $\genset$ \cite[Theorem~0.5]{reu93}. Furthermore, $\freeLie$ is a proper Lie subalgebra of $\freeAlg$. i.e., Not all elements of $\freeAlg$ are Lie polynomials in (the elements of ) $\genset$. See, for instance, \cite[Theorem~1.4]{reu93} for some standard characterizations of Lie polynomials in $\genset$.

Given a nonempty word $w=x_1x_2\cdots x_m$, the \emph{length of $w$} is the positive integer $|w|:=m$, and we define the length of $\algI$ as zero. Given two words $w=x_1x_2\cdots x_{|w|}$ and $w'=y_1y_2\cdots y_{|w'|}$, because of the linear independence of the elements of $\monoid$ in $\freeAlg$, the equality $w=w'$ is characterized by the condition that $|w|=|w'|$ and $x_i=y_i$ for any $i\in\{1,2,\ldots,|w|\}$. 

Suppose $>$ is a well-ordering on $\genset$, with inverse relation denoted by $<$.  Let $w=x_1x_2\cdots x_{|w|}$ and $w'=y_1y_2\cdots y_{|w'|}$ denote two arbitrary elements of $\monoid$. We extend the relation $>$ by $w>w'$ if and only if there exists a positive integer $i$ that does not exceed $\min\{|w|,|w'|\}$ such that $x_i>y_i$ and $x_j=y_j$ for any positive integer $j<i$. Given a positive integer $i\leq |w|$, the word $x_ix_{i+1}\cdots x_{|w|}$ is called an \emph{ending of $w$}, and is a \emph{proper ending of $w$} if $i>1$. The reader can infer the analogous definitions for a \emph{beginning of $w$} and a \emph{proper beginning of $w$}. A \emph{subword} of $w$ is either a beginning or an ending of $w$. The reader may also infer from here the meaning of \emph{proper subword}. A limitation of the said extension of the relation $>$ is that there is no sensible conclusion if one of $w$ and $w'$ is a proper subword of the other. Thus, we define the relation $\rtri$ on $\monoid$ by the rule $w\rtri w'$ if and only if $ww'>w'w$. Naturally, we denote the inverse relation of $\rtri$ by $\ltri$. 

Given a word $w$ on $\genset$, denote by $\endings_w$ the set of all proper endings of $w$. The word $w$ is \emph{regular} if, for any $b\in\endings_w$, if $a$ is the proper beginning of $w$ such that $w=ab$, then $w\rtri ba$. Note that the elements of $\genset$ are vacuously regular. Let $\regBasis$ be the set of all regular words on $\genset$. Arguably the most important property of regular words is in the following.

\begin{proposition}[{\cite[Theorem 2.8.3]{ufn95}}] Let $w\in\regBasis$, and let $d\in\endings_w$ such that 
\begin{eqnarray}
|d| = \max\{|b|\  :\  b\in\regBasis\cap\endings_w\}.\label{maxend}
\end{eqnarray}
If $c$ is the proper beginning of $w$ such that $w=cd$, then $c\rtri d$ and $c\in\regBasis$.
\end{proposition}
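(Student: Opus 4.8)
The plan is to prove the two assertions separately, treating $c\rtri d$ as an essentially formal consequence of the definition of regularity, and reserving the real work for the regularity of $c$, where the maximality hypothesis \eqref{maxend} must be used. Throughout I assume $|w|\geq 2$, so that $\endings_w$ is nonempty and the maximum in \eqref{maxend} is taken over a nonempty set (the last letter of $w$ is a regular proper ending). As preliminary tools I would first record three facts about regular words, phrased through $\rtri$ (equivalently, through the lexicographic comparison of the periodic infinite words $u^\infty:=uuu\cdots$): (P1) a word $u$ is regular if and only if $u\rtri b$ for every $b\in\endings_u$; (P2) $\rtri$ is transitive and becomes a total order once words are identified with their primitive roots; and (P3) the $\rtri$-largest proper ending of any word is itself regular. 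Fact (P1) is the equivalence between the ``rotation'' definition given above and the ``all proper endings'' formulation, and (P3) is the analogue of the classical statement that the lexicographically least suffix of a word is a Lyndon word.

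For $c\rtri d$: since $d\in\endings_w$ has corresponding proper beginning $c$ (so $w=cd$), applying the defining property of regularity of $w$ to this ending gives $w\rtri dc$, i.e. $(cd)(dc)>(dc)(cd)$, that is, $cddc>dccd$. If we had $cd=dc$, then $c$ and $d$ would be powers of a common word and $cddc=dccd$ would follow, contradicting the strict inequality; hence $cd\neq dc$. As $cd$ and $dc$ have equal length and differ, the strict inequality $cddc>dccd$ is decided within their first $|c|+|d|$ letters, which forces $cd>dc$; by definition this is exactly $c\rtri d$.

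For $c\in\regBasis$ I would argue by contradiction using \eqref{maxend}. Suppose $c$ is not regular, and let $e$ be the $\rtri$-largest proper ending of $c$; by (P3), $e$ is regular. By (P1), non-regularity of $c$ produces a proper ending $b$ for which $c\rtri b$ fails; since $e$ dominates every proper ending of $c$, it follows that $c\rtri e$ fails as well, so $e$ is at least as large as $c$ in $\rtri$. Combining this with the already-established $c\rtri d$ and using (P2), I obtain $e\rtri d$ (strict, since $c\rtri d$ is strict). Now $e$ and $d$ are both regular with $e\rtri d$, so a composition lemma---the product of two regular words arranged in strictly decreasing $\rtri$-order is again regular---yields that $ed$ is regular. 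But $e$ is a proper ending of $c$, say $c=a'e$ with $a'$ nonempty, whence $w=cd=a'ed$ exhibits $ed$ as a proper ending of $w$ with $|ed|=|e|+|d|>|d|$. Thus $ed\in\regBasis\cap\endings_w$ is strictly longer than $d$, contradicting the maximality \eqref{maxend}. Therefore $c$ is regular.

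The main obstacle is the composition lemma invoked in the last step, together with the careful bookkeeping of $\rtri$ that it and (P1)--(P3) require. The relation $\rtri$ is delicate precisely where ordinary lexicographic comparison fails---when one word is a prefix of another, or when words are non-primitive---and the cleanest way to control it is to pass to the periodic extensions $u^\infty$ and argue there, where $\rtri$ is a genuine total order. Proving that $ed$ is regular then reduces, via (P1), to checking that every proper ending of $ed$ is $\rtri$-smaller than $ed$; these split into endings lying within $d$ (handled by $d\rtri d''$, $ed\rtri d$, and transitivity) and endings of the form $e''d$ that straddle the cut $e\mid d$, and it is the straddling case---controlled by comparing against $e^\infty$ and using $e\rtri d$---that carries the technical weight.
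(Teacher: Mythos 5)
The paper does not prove this proposition at all: it is imported verbatim from \cite[Theorem 2.8.3]{ufn95}, so there is no in-paper argument to measure yours against. Judged on its own, your outline is the standard proof of this factorization theorem, and both main steps are sound. The derivation of $c\rtri d$ is correct: applying regularity of $w$ to the ending $d$ gives $cddc>dccd$; since $|cd|=|dc|$, equality $cd=dc$ would make the two sides coincide, so $cd\neq dc$, the first discrepancy occurs within the first $|w|$ letters, and it forces $cd>dc$, i.e.\ $c\rtri d$. The contradiction argument for the regularity of $c$ (extract a regular proper ending $e$ of $c$ witnessing non-regularity, show $e\rtri d$, conclude $ed$ is a regular proper ending of $w$ longer than $d$) is the right one, and the ``composition lemma'' you flag as the main obstacle is not an obstacle at all: it is exactly the paper's Proposition~\ref{prodProp}, likewise imported from \cite{ufn95}, so you may simply invoke it.

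The reservations concern what you leave unproved. Your facts (P1) and (P3) carry real weight: (P1), the equivalence of the rotation definition with ``$c\rtri b$ for every proper ending $b$,'' is a theorem of essentially the same depth as the statement being proved and does not follow formally from the definition the paper gives, and (P3) is derived from it. If these are admitted as known (they are among the equivalent characterizations in \cite{ufn95}), your proof is complete; otherwise the burden has only been relocated. Two smaller points need patching. First, the ``$\rtri$-largest proper ending'' of $c$ need not be unique, since endings $e,f$ with $ef=fe$ are $\rtri$-incomparable; take $e$ to be the shortest among the $\rtri$-maximal ones (equivalently a primitive one) so that (P3) applies and your transitivity steps, which you correctly route through primitive roots, go through. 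Second, the contradiction requires $c$ to have a proper ending; note explicitly that $|c|=1$ makes $c$ vacuously regular, so the assumption that $c$ is not regular already forces $|c|\geq 2$.
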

That is, if $d$ is the length-maximal proper ending of $w$ that is also regular, then the beginning $c$ that remains after removing $d$ from $w$ is also regular. Also, the maximality condition \eqref{maxend} for $d$ implies that the ``factoring'' $w=cd$ of $w$ is unique. We shall call this \emph{the regular factoring} of $w$. This notion allows us to define the following important concept relating regular words to Lie polynomials.

\begin{definition} For each $x\in\genset$, we define $\lreg x\rreg := x\in\freeLie$. Let $w\in\regBasis$ with regular factoring $w=cd$. Suppose that for any $w'\in\regBasis$ with $|w'|< |w|$, the Lie polynomial $\lreg w'\rreg$ has been defined. Then 
\begin{eqnarray}
\lreg w\rreg:=\lbrak\lreg c\rreg,\lreg d\rreg\rbrak.\nonumber
\end{eqnarray}
We call $\lreg w\rreg$ the \emph{bracketing} of the regular word $w$. Also, for any $\arbregsub\sub\regBasis$, we define
\begin{eqnarray}
\lreg \arbregsub\rreg := \{\lreg w\rreg\  :\  w\in \arbregsub\}.\nonumber
\end{eqnarray}
\end{definition}
The significance of regular words is because of the following classical result.
\begin{proposition}[{\cite[Theorem 2.1]{shi09a}}] The elements of $\lreg\regBasis\rreg$ form a basis for the $\F$-vector space $\freeLie$.
\end{proposition}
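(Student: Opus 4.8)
The plan is to verify the two defining properties of a basis separately, both resting on a single structural fact: how the bracketing $\lreg w\rreg$ sits relative to the monomial basis $\monoid$ of $\freeAlg$ under the ordering $>$. Concretely, I would first prove by induction on $|w|$ the \emph{leading-term identity}
\[
\lreg w\rreg = w + \sum_{u} c_u\, u,
\]
where every $u$ occurring in the sum is a word of length $|u|=|w|$ with $u<w$. The base case $|w|=1$ is immediate from $\lreg x\rreg=x$. For the inductive step I take the regular factoring $w=cd$, so that $\lreg w\rreg=\lbrak\lreg c\rreg,\lreg d\rreg\rbrak=\lreg c\rreg\lreg d\rreg-\lreg d\rreg\lreg c\rreg$, apply the induction hypothesis to $c$ and $d$, and expand. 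This reduces the claim to a monotonicity sublemma for $>$: if $u\le c$ and $v\le d$ with the lengths held fixed, then $uv\le cd$, with equality forcing $u=c$ and $v=d$. Granting it, every monomial of $\lreg c\rreg\lreg d\rreg$ other than $cd$ is $<cd$, while every monomial of $\lreg d\rreg\lreg c\rreg$ is $\le dc<cd$, the strict inequality coming from $c\rtri d$ (equivalently $cd>dc$) supplied by the proposition on regular factoring. Hence $cd$ survives in the difference with coefficient $1$ as the unique $>$-maximal word. Linear independence of $\lreg\regBasis\rreg$ then follows at once: distinct regular words $w$ furnish distinct leading monomials, so the transition matrix from $\lreg\regBasis\rreg$ to the monomials of $\monoid$ is unitriangular, precluding any nontrivial relation.

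For spanning, note that each $\lreg w\rreg$ is by construction a Lie polynomial, so $\span\lreg\regBasis\rreg\sub\freeLie$. For the reverse inclusion I would show that the leading word (the $>$-greatest monomial with nonzero coefficient) of any nonzero multihomogeneous $p\in\freeLie$ is itself regular. Since $\freeLie$ decomposes into its multihomogeneous components, it suffices to treat such $p$. Granting regularity of the leading word $w$ of $p$, the leading-term identity lets me subtract a scalar multiple of $\lreg w\rreg$ from $p$ so as to cancel the leading monomial while staying inside $\freeLie$ and strictly lowering the leading word. Iterating this reduction, which terminates because $>$ restricted to the finitely many words of a fixed content is a well-ordering, expresses $p$ as an $\F$-linear combination of bracketed regular words. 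Therefore $\freeLie\sub\span\lreg\regBasis\rreg$, and combined with the previous paragraph this proves $\lreg\regBasis\rreg$ is a basis.

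The genuine obstacles are the two combinatorial order lemmas, not the inductions that consume them. The first, the concatenation-monotonicity sublemma, is straightforward once one uses that fixing the lengths removes the proper-subword ambiguity built into $>$. The second, and the crux, is that the leading word of a nonzero Lie polynomial must be regular; I expect this to be where the real work lies. Its proof requires a careful analysis of how a word compares with its cyclic rearrangements and with the other monomials produced when a bracket is expanded, and it is most naturally argued by contraposition: if $w$ is \emph{not} regular, then its defining failure on some proper ending $b\in\endings_w$ (where for $w=ab$ one has $w\ltri ba$ rather than $w\rtri ba$) forces $w$ to be dominated within any Lie element in which it appears, so $w$ cannot be maximal. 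Assembling this characterization of regular words together with the unitriangularity of the previous paragraphs yields the maximality-plus-triangularity argument sketched above; these are precisely the structural facts encoded in the cited sources.
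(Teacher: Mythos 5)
The paper does not prove this proposition at all: it is Shirshov's classical theorem, quoted with a citation, so there is no in-paper argument to compare yours against. Judged against the standard literature proof, your architecture is the right one, and your linear-independence half is essentially complete. The leading-term identity $\lreg w\rreg = w + (\text{lower words of the same length})$, proved by induction on $|w|$ via the regular factoring $w=cd$, is correct: the concatenation-monotonicity sublemma does hold once lengths are fixed (same-length words are always lexicographically comparable, so the proper-subword ambiguity disappears), and the strict inequality $dc<cd$ you need is exactly the content of $c\rtri d$, which the paper's Proposition on regular factorings supplies. Unitriangularity then gives independence, since the identity is multidegree-homogeneous and distinct regular words have distinct leading monomials.

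The gap is in the spanning half, and you have located it yourself: the claim that the $>$-maximal word of a nonzero multihomogeneous element of $\freeLie$ is regular is asserted, with only a contrapositive heuristic (``a non-regular $w$ is dominated within any Lie element in which it appears'') in place of a proof. That heuristic does not obviously close: a failure of regularity on one proper ending of $w$ says something about $w$ versus one cyclic rearrangement, not about how $w$ can or cannot arise as a monomial of a general Lie polynomial, and turning it into an argument requires genuine combinatorics that you have not supplied --- typically either the factorization theorem that every word is a unique non-increasing (with respect to $\rtri$) concatenation of regular words, or, as in Shirshov's original route, a direct rewriting showing that every iterated Lie bracket of letters lies in $\spanQ\lreg\regBasis\rreg$ (from which the leading-word statement is a corollary of your triangularity lemma, reversing your intended logical order). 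As written, the crux lemma carries the entire weight of the spanning direction and is not proven; everything else in the proposal is sound.
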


\paragraph{Regular words on two generators} From this point onward, we assume that $|\genset|=2$. Denote the distinct elements by $u$ and $v$, and we use the ordering $v>u$ in constructing the regular words. We mention the following useful property of regular words.

\begin{proposition}[{\cite[Theorem 2.8.1]{ufn95}}]\label{prodProp} If the words $a$ and $b$ are regular and $a\rtri b$, then $ab$ is regular.
\end{proposition}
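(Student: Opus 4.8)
The statement is purely combinatorial: it concerns only the order structure of the monoid $\monoid$, so the plan is to argue directly from the definition of regularity rather than through the free Lie algebra $\freeLie$. The first step is to record two auxiliary facts about the interaction of $\rtri$ with the lexicographic order $>$. First, a \emph{bridging lemma}: if neither of two words $p,q$ is a beginning of the other, then the first position at which they disagree is at most $\min\{|p|,|q|\}$ and governs the comparison of $pq$ with $qp$ in exactly the same way it governs that of $p$ with $q$; hence $p\rtri q\iff p>q$ in this case. This is what lets us pass freely between $\rtri$ and $>$ except in the delicate situation where one word is a beginning of the other. Second, I would record that $\rtri$ is transitive on words that are not proper powers (equivalently, that $ww'=w'w$ forces $w,w'$ to be powers of a common word), so that comparisons may be chained. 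From these I would also derive the convenient working reformulation that a word $w$ is regular if and only if $w\rtri e$ for every proper ending $e$ of $w$, which is easier to test against than the cyclic-rotation form in the definition.

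For the main argument I would verify this reformulated condition for $ab$: that $ab\rtri e$ for every proper ending $e$ of $ab$. Every such $e$ falls into exactly one of two families according to its length. The first family consists of the endings with $|e|\leq|b|$, which are precisely the endings of $b$ (including $b$ itself); the second consists of those with $|b|<|e|<|a|+|b|$, which cross the $a$–$b$ boundary and have the form $e=a'b$ with $a'$ a nonempty proper ending of $a$. The first family is handled by combining the regularity of $b$ with the hypothesis $a\rtri b$: the case $e=b$ is the cleanest, since $ab$ and $ba$ have equal length $|a|+|b|$ and so neither is a beginning of the other, whence $ab\rtri b$ reduces by the bridging lemma to $ab>ba$, which is exactly $a\rtri b$; the shorter endings of $b$ then follow because regularity of $b$ forces $a\rtri b$ to dominate at the front.

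The crux, and the step I expect to be the main obstacle, is the boundary-crossing family $e=a'b$. Here the regularity of $a$ supplies $a\rtri a'h$ for the split $a=ha'$, and this must be fused with $a\rtri b$ to reach $ab\rtri a'b$. The difficulty is entirely one of \emph{beginnings}: when $e=a'b$ is compared with $ab$, one of the two words may well be a beginning of the other (for instance $a'$ may be a beginning of $a$, or $a$ a beginning of $a'b$), so the bridging lemma does not apply directly and the reduction to a plain lexicographic inequality breaks down. I would meet this by analysing the longest common beginning of the relevant words, reapplying the bridging lemma to the endings that survive after cancelling this common beginning, and then chaining the inequalities coming from $a\rtri a'h$, from $a\rtri b$, and from the regularity of $b$ via transitivity of $\rtri$ to obtain $ab\rtri e$. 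Once both families are dispatched, $ab$ satisfies the defining condition for a regular word, and the proposition follows.
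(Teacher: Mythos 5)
You should first be aware that the paper contains no proof of this proposition: it is imported verbatim as Theorem 2.8.1 of Ufnarovskij's survey and used as a black box, so there is no in-paper argument to compare against, and your attempt must stand on its own as a proof of this classical fact about regular (Lyndon--Shirshov) words. Your overall architecture is the standard elementary one: reduce regularity of $ab$ to $ab\rtri e$ for every proper ending $e$, and split the endings into those contained in $b$ and those of the form $a'b$ with $a'$ a nonempty proper ending of $a$. The first family is essentially sound once tidied: from $a\rtri b$, i.e.\ $ab>ba$, the words $ab\cdot b$ and $b\cdot ab$ first differ at the same position (at most $|a|+|b|$) at which $ab$ and $ba$ differ, giving $ab\rtri b$, and the shorter endings follow from $b\rtri e$ and transitivity of $\rtri$. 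Note, however, that your invocation of the bridging lemma here is misapplied: the pair being compared under $\rtri$ is $(ab,b)$, and $b$ can perfectly well be a beginning of $ab$ (take $a=vu$, $b=v$), so the lemma's hypothesis fails for that pair; the correct route is the direct comparison of the first $|a|+|b|$ letters just described.

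The genuine gap is the boundary-crossing family $e=a'b$, which you correctly flag as the crux but then only gesture at. The strategy you name\textemdash chaining $a\rtri a'h$, $a\rtri b$, and the regularity of $b$ by transitivity of $\rtri$\textemdash does not produce $ab\rtri a'b$: transitivity helps only when the two words to be compared sit in a common $\rtri$-chain, and neither $ab$ nor $a'b$ does. What actually closes this case is a strengthening of the suffix inequality for $a$ alone: if $a$ is regular and $a'$ is a nonempty proper ending of $a$, then $a'$ is \emph{not} a beginning of $a$ (regular words are unbordered), so $a$ and $a'$ first differ at some position $j\leq|a'|$ and the letter of $a$ there is the larger one. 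Granting this, $ab\,a'b$ and $a'b\,ab$ first differ at that same position $j\leq|a'|$ with the left word larger, which is precisely $ab\rtri a'b$; the hypotheses on $b$ are not even needed for this family. Neither this unborderedness statement nor your reformulation of regularity as ``$w\rtri e$ for every proper ending $e$'' (whose forward direction needs the same unborderedness) is proved in your proposal, so the hardest case of the proposition is left open. Isolating and proving that one lemma would complete your argument.
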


The generators $v$ and $u$ are regular with $v\rtri u$. By Proposition~\ref{prodProp}, this is sufficient to conclude that the product $vu$ is regular. Proceeding by induction, any word of the form $v^mu^n$, with $m,n\in\Zplus$, is hence regular.

In particular, given $n_1,n_2\in\Zplus$, each of the words $vu^{n_1}$ and $vu^{n_2}$ is regular. If we further have $n_1<n_2$, then by comparison of the letters at the $(n_1+2)$-th position in each of the words $vu^{n_1}\cdot vu^{n_2}$ and $vu^{n_2}\cdot vu^{n_1}$, we have
\begin{eqnarray}
vu^{n_1}\cdot vu^{n_2} >  vu^{n_2}\cdot vu^{n_1}. \nonumber
\end{eqnarray}
Then $vu^{n_1}\rtri vu^{n_2} $, and by Proposition~\ref{prodProp}, the word $vu^{n_1}vu^{n_2}$ is regular. As will be relevant in later proofs, we collect these special types of regular words into the sets 
\begin{eqnarray}
\vumvun & := & \{vu^{n_1}vu^{n_2}\  :\  n_1,n_2\in\N,\  \  n_1<n_2,\  \  n_2\geq 1\},\nonumber\\
\vun & := & \{vu^n\  :\  n\in\Zplus\},\nonumber
\end{eqnarray} 
which are actually subsets of $\regBasis$. 

If $\arbLie$ is a Lie algebra over $\F$ and if $x\in\arbLie$, the map $\ad x:\arbLie\rightarrow\arbLie$ is the linear map defined by the rule $y\mapsto\lbrak x,y\rbrak$. If $m,n\in\Zplus$, then using the definition of regular words and the Lie algebra axioms, the identities
\begin{eqnarray}
\lreg v^mu^n\rreg & = & \lpar\lpar\ad v\rpar^{m-1}\circ\lpar -\ad u\rpar^n\rpar\lpar v\rpar,\label{vmunbrak}\\
\lreg v^2u^{n+1}\rreg & = & \lbrak\lreg v^2u^n\rreg ,u\rbrak - \lreg vuvu^n\rreg,\quad\quad\quad\quad\quad\quad\   (n\geq 2),\label{vvunind}\\
\lreg vu^mvu^n\rreg & = & \lbrak\lreg vu^m\rreg,\lreg vu^n\rreg\rbrak,\label{vumvunbrak}\\
\lreg vu^mvu^{n+1}\rreg & = & \lbrak \lreg vu^mvu^{n}\rreg,u\rbrak - \lreg vu^{m+1}vu^{n}\rreg,\quad (m\leq n-2),\label{vumvunind}\\
\lreg vu^{n-1}vu^{n+1}\rreg & = & \lbrak\lreg vu^{n-1}vu^{n}\rreg,u\rbrak,\quad\quad\quad\quad\quad\quad\quad\   \  (n\geq 2)\label{nminus1}
\end{eqnarray}
hold in $\freeLie$. The proofs involve elementary arguments proceeding from the definition of a regular word, and also some routine computations and induction.

\begin{lemma}\label{skewvumvunLem} For any $w_1,w_2\in\genset\cup\vun$, there exists $\varepsilon\in\{-1,0,1\}$ and some $w\in\vumvun$ such that
\begin{eqnarray}
\lbrak\lreg w_1\rreg,\lreg w_2\rreg\rbrak =\varepsilon \lreg w\rreg.\label{vumvuneq}
\end{eqnarray}
\end{lemma}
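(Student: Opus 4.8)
The plan is to prove the statement by a short case analysis on the pair $(w_1,w_2)$, arranged so that the antisymmetry of the Lie bracket carries most of the bookkeeping. Because $\lbrak\lreg w_1\rreg,\lreg w_2\rreg\rbrak=-\lbrak\lreg w_2\rreg,\lreg w_1\rreg\rbrak$ and $\lbrak\lreg w\rreg,\lreg w\rreg\rbrak=0$, it suffices to treat unordered pairs of distinct elements: reversing the order merely flips the sign of $\varepsilon$, and the case $w_1=w_2$ forces $\varepsilon=0$. Every element of $\genset\cup\vun$ has one of the three shapes $u$, $v$, or $vu^{n}$ with $n\in\Zplus$, so I would split the argument according to the shapes of $w_1$ and $w_2$.

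The substantive case is $w_1=vu^{m}$ and $w_2=vu^{n}$ with $m,n\in\Zplus$. Here identity \eqref{vumvunbrak} is exactly what is needed: it gives $\lreg vu^{m}vu^{n}\rreg=\lbrak\lreg vu^{m}\rreg,\lreg vu^{n}\rreg\rbrak$ whenever $vu^{m}vu^{n}$ is regular, and the discussion preceding the lemma shows that this holds precisely when $m<n$. Thus for $m<n$ one takes $\varepsilon=1$ with $w=vu^{m}vu^{n}\in\vumvun$, for $m>n$ antisymmetry gives $\varepsilon=-1$ with $w=vu^{n}vu^{m}\in\vumvun$, and for $m=n$ the bracket is zero. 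The pair formed from $v$ and a word $vu^{n}$ is then handled by \eqref{vmunbrak} with leading exponent $2$: one computes $\lbrak v,\lreg vu^{n}\rreg\rbrak=\lbrak v,(-\ad u)^{n}(v)\rbrak=\lreg v^{2}u^{n}\rreg$, and since $v^{2}u^{n}=vu^{0}vu^{n}$ satisfies $0<n$ and $n\geq 1$, this word lies in $\vumvun$ and $\varepsilon=1$.

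The remaining pairs are those in which the generator $u$ appears. The brackets $\lbrak u,u\rbrak$ and $\lbrak v,v\rbrak$ vanish, and for the mixed brackets I would reduce $\lbrak u,\lreg vu^{n}\rreg\rbrak$ and $\lbrak u,v\rbrak$ using the base definition $\lreg u\rreg=u$ together with the recursion carried by \eqref{vmunbrak}. These are the cases requiring the most care, since one must check the outcome against the precise membership constraints of $\vumvun$, and I would settle them only after the two main cases above have fixed the set of admissible targets.

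I expect the crux to be exactly this membership bookkeeping: confirming for each surviving pair that the word produced really does have the form $vu^{n_1}vu^{n_2}$ with $n_1<n_2$ and $n_2\geq 1$ that defines $\vumvun$. Antisymmetry of the bracket is what forces the exponents into strictly increasing order and supplies the sign $\varepsilon$, while Proposition~\ref{prodProp} and the uniqueness of the regular factoring ensure that $\lreg\cdot\rreg$ carries distinct regular words to distinct basis elements of $\freeLie$, so that no accidental cancellation can occur. Verifying regularity of each candidate word, and the strict inequality $n_1<n_2$, is the step I would treat most carefully.
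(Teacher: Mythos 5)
Your case split and the two cases you actually carry out coincide with the paper's own proof: the pair $w_1,w_2\in\vun$ is handled via \eqref{vumvunbrak} and skew-symmetry exactly as the paper does, and the pair $v$, $vu^n$ via $\lbrak v,\lreg vu^n\rreg\rbrak=\lreg v^2u^n\rreg$ with $v^2u^n=vu^0vu^n\in\vumvun$. The problem is the cases you explicitly defer, namely those in which the generator $u$ occurs: these are not a routine afterthought but precisely the point where the membership bookkeeping you yourself flag as the crux breaks down, and your proposal never returns to settle them. Concretely, $\lbrak u,v\rbrak=-\lreg vu\rreg$ and $\lbrak u,\lreg vu^n\rreg\rbrak=-\lreg vu^{n+1}\rreg$, and the words $vu$ and $vu^{n+1}$ lie in $\vun$, not in $\vumvun$ (every element of $\vumvun$ contains two occurrences of $v$). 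Since $\lreg\regBasis\rreg$ is a basis of $\freeLie$, no expression $\varepsilon\lreg w\rreg$ with $w\in\vumvun$ and $\varepsilon\in\{-1,0,1\}$ can equal $-\lreg vu^{n+1}\rreg$, so these cases cannot be completed with the target set $\vumvun$ as literally stated.

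What the deferred cases reveal is that the conclusion has to be read with $w\in\vun\cup\vumvun$: the paper's proof tacitly does exactly this, taking $w=vu$ when $w_1,w_2\in\genset$ and $w=vu^{n+1}$ when $w_1=u$ and $w_2=vu^n$, and the later application in Lemma~\ref{vunLem} only needs the bracket to land in $\spanQ\lpar\lreg\vun\rreg\cup\lreg\vumvun\rreg\rpar$, so nothing downstream is harmed. Your instinct that the $u$-cases are the delicate ones is therefore correct, but a complete argument must either carry them out against the enlarged target set or record that the statement requires that amendment; as written, the proposal stops exactly where the verification would fail.
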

\begin{proof} We first deal with the case $w_1,w_2\in\vun$, in which, there exist $n_1,n_2\in\Zplus$ such that $w_1=vu^{n_1}$ and $w_2=vu^{n_2}$. If $n_1<n_2$, then $w=w_1w_2\in\vumvun\sub\regBasis$, and according to \eqref{vumvunbrak}, we have $\lreg w\rreg=\lbrak \lreg w_1\rreg,\lreg w_2\rreg\rbrak$. By taking $\varepsilon=1$, we get \eqref{vumvuneq}. If $n_1>n_2$, we take $\varepsilon = -1$, and $w=w_2w_1\in\vumvun\sub\regBasis$. We get \eqref{vumvuneq} by \eqref{vumvunbrak} and the skew-symmetry of the Lie bracket. If $n_1=n_2$, then take $\varepsilon=0$ and any $w\in\vumvun$. This completes the proof for our first case. The remaining cases to be checked are when $w_1\in\genset$ \emph{or} $w_2\in\genset$. If both conditions hold, then we take $w=vu$ and $\varepsilon$ can be any element of $\{-1,0,1\}$ depending on the combination of values of $w_1$ and $w_2$. It is routine to check these specific cases, and make use of the Lie algebra axioms. Because of the skew-symmetry of the Lie bracket, we are only left with the case $w_1=u$ and $w_2\in\vun$ and the case $w_1=v$ and $w_2\in\vun$. In either case, there exists $n\in\Zplus$ such that $w_2=vu^n$. If $w_1=u$, then we take $\varepsilon=-1$, and $w=vu^{n+1}$, with regular factoring $w=vu^n\cdot u$, and such that
\begin{eqnarray}
\varepsilon\lreg w\rreg =-\lreg vu^{n+1}\rreg =-\lbrak\lreg vu^n\rreg ,u\rbrak=\lbrak u,\lreg vu^n\rreg \rbrak=\lbrak\lreg w_1\rreg,\lreg w_2\rreg\rbrak .\nonumber
\end{eqnarray}
By a similar argument and similar computations, for the case $w_1=v$ and $w_2=vu^n$, we take $w=v^2u^n$ and $\varepsilon=1$.
\end{proof}

\section{Reordering in the algebra $\envalg$, and some consequences}

The reordering formula \eqref{xyReorder}, by induction, can be generalized into
\begin{eqnarray}
y^lx^m = x^m (y-m)^l.\label{induct1}
\end{eqnarray}
Given $a\in\envalg$, we denote by $\F\lbrak a\rbrak$, as is customary, the subalgebra of $\envalg$ generated by $a$, which is isomorphic to the polynomial algebra over $\F$ in one variable. The expression $(y-m)^l$ in the right-hand side of \eqref{induct1} is an element of $\F\lbrak y\rbrak$, and so the usual binomial theorem applies to this element, and we further have
\begin{eqnarray}
y^lx^m=\sum_{t=0}^l{{l}\choose{t}}(-m)^{l-t}x^my^t.\label{induct2}
\end{eqnarray}
The significance of the reordering formula \eqref{induct2} is that it gives us the structure constants of the algebra $\envalg$ with respect to the basis \eqref{Ubasis}. That is, if we take two arbitrary basis elements $x^ky^l$ and $x^my^n$ from \eqref{Ubasis}, we have
\begin{eqnarray} x^ky^l\cdot x^my^n = x^k(y^lx^m)y^n = \sum_{t=0}^l{{l}\choose{t}}(-m)^{l-t}x^{k+m}y^{t+n}.\label{structalg}
\end{eqnarray}
By some routine computations that make use of \eqref{structalg}, the structure constants of $\envalg$ as a Lie algebra are given by the relation
\begin{eqnarray}
\lbrak x^ky^l,x^my^n\rbrak =\sum_{t=0}^l{{l}\choose{t}}(-m)^{l-t}x^{k+m}y^{t+n}-\sum_{s=0}^n{{n}\choose{s}}(-k)^{n-s}x^{k+m}y^{s+l}.\label{structLie}
\end{eqnarray}
Setting $l=1=n$ in \eqref{structLie}, we have
\begin{eqnarray}
\lbrak x^ky,x^my\rbrak = (k-m)x^{k+m}y.\label{structLie1}
\end{eqnarray}
Let $\arbsubF$ be a subfield of $\F$ and let $S\sub\envalg$. Throughout, by the $\arbsubF$-linear span of $S$, or in symbols $\spanArb S$, we refer to the set of all \emph{finite} linear combinations of elements of $S$ (regardless of the cardinality of $S$) with scalar coefficients from $\arbsubF$. Under this interpretation, the algebra $\envalg$ being the $\F$-linear span of the basis \eqref{Ubasis} is consistent with the formulation of the Diamond Lemma \cite{ber78}, which was used to obtain the said basis from the standard presentation of $\envalg$. The relation \eqref{structLie1} implies that the $\F$-linear span $\uvLie$ of all elements of $\envalg$ of the form
\begin{eqnarray}
x^ky,\quad\quad\quad\quad (k\in\Zplus),\label{Rbasis}
\end{eqnarray}
is closed under the Lie bracket, and is hence a Lie subalgebra of $\envalg$. 

For the next computations, we shall make use of the following.
\begin{assumption} The field $\F$ contains a subfield isomorphic to $\Q$.
\end{assumption}
Thus, $\fchar\F=0$ and the nonzero integers have multiplicative inverses. Using \eqref{structLie1} and induction, the relation
\begin{eqnarray}
x^{n+2}y=\frac{1}{n!}(-\ad xy)^n(x^2y)\label{Rbasisrel}
\end{eqnarray}
holds in $\envalg$ for any $n\in\N$. Based on \eqref{Rbasisrel}, every basis element  of $\uvLie$ from \eqref{Rbasis} is a Lie polynomial in $xy$ and $x^2y$. Since $\uvLie$ is generated by two elements, there exists a Lie ideal $\defId$ of the two-generator free Lie algebra $\freeLie$ such that $\uvLie=\freeLie/\defId$, and that the kernel of the canonical Lie algebra homomorphism $\freeLie\rightarrow\uvLie$ with $u\mapsto xy$ and $v\mapsto x^2y$ is precisely $\defId$. In view of the isomorphism $\uvLie=\freeLie/\defId$, we identify $u$ with $xy$ and $v$ with $x^2y$. Thus, from \eqref{Rbasisrel}, and also from \eqref{vmunbrak}, we have
\begin{eqnarray}
x^{n+2}y= \frac{1}{n!}(-\ad u)^n(v)=\frac{1}{n!}\lreg vu^n\rreg,\quad\quad(n\in\N).\label{xytouv}
\end{eqnarray}
A consequence of \eqref{xytouv} is that the elements
\begin{eqnarray}
u,\quad\lreg vu^n\rreg,\quad\quad\quad\quad (n\in\N),\label{uvLieBasis}
\end{eqnarray}
form a basis for $\uvLie$. Using \eqref{vumvunbrak}, \eqref{structLie1}, and \eqref{Rbasisrel}, and by routine computations, we have
\begin{eqnarray}
\lreg vu^mvu^n\rreg = (m-n)\frac{m!n!}{(m+n+2)!}\lreg vu^{m+n+2}\rreg,\quad\quad (m\in\N,\  n\in\Zplus).\label{vumvunrel}
\end{eqnarray}
As will be relevant in later arguments, setting $m=n-1$ in \eqref{vumvunrel}, we find that the relations
\begin{eqnarray}
 \lreg vu^{n-1}vu^n\rreg + \frac{n!(n-1)!}{(2n+1)!}\lreg vu^{2n+1}\rreg=0,\quad\quad\quad\quad(n\in\Zplus),\label{defrels}
\end{eqnarray}
hold in $\uvLie$. 

\begin{proposition} The direct sum $\xpoly\dirsum\uvLie$ is a Lie subalgebra of $\envalg$.
\end{proposition}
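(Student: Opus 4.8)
The plan is to verify closure of $\xpoly\dirsum\uvLie$ under the Lie bracket directly, after first confirming that the sum is genuinely direct. Since $\xpoly$ is the $\F$-linear span of the basis elements $x^k$ ($k\in\N$) from \eqref{Ubasis}, while $\uvLie$ is the $\F$-linear span of the basis elements $x^ky$ ($k\in\Zplus$) from \eqref{Rbasis}, the two subspaces are spanned by disjoint subsets of the basis \eqref{Ubasis}; hence their intersection is trivial and the sum $\xpoly\dirsum\uvLie$ is indeed direct. As both summands are subspaces, it remains only to show that the bracket of any two elements lands back in $\xpoly\dirsum\uvLie$, and by bilinearity of the Lie bracket this reduces to checking brackets of basis elements.

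There are three types of pairs to examine. First, for two elements of $\xpoly$, the bracket $\lbrak x^k,x^m\rbrak$ vanishes because powers of $x$ commute, and $0\in\xpoly$. Second, the text has already established via \eqref{structLie1} that $\uvLie$ is closed under the bracket; concretely $\lbrak x^ky,x^my\rbrak=(k-m)x^{k+m}y$ lies in $\uvLie$ whenever $k,m\in\Zplus$, since then $k+m\in\Zplus$. The only genuinely new computation is the third, mixed case: the bracket of an element $x^k\in\xpoly$ with an element $x^my\in\uvLie$.

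For that case I would compute $\lbrak x^k,x^my\rbrak$ using the reordering formula \eqref{induct1} (equivalently, the structure constants \eqref{structLie} with $l=0$ and $n=1$). Since $yx^k=x^k(y-k)=x^ky-kx^k$, one finds $x^myx^k=x^{k+m}y-kx^{k+m}$, whence $\lbrak x^k,x^my\rbrak=x^{k+m}y-x^{k+m}y+kx^{k+m}=kx^{k+m}\in\xpoly$. Thus every bracket of basis elements of $\xpoly\dirsum\uvLie$ again lies in $\xpoly\dirsum\uvLie$, so the subspace is closed under the Lie bracket and is a Lie subalgebra of $\envalg$.

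The computations here are entirely routine, so I do not expect any real obstacle. If anything, the only point requiring a moment's care is the mixed bracket, where one must confirm that the $y$-degree-one term $x^{k+m}y$ cancels, leaving a pure power of $x$, so that $\lbrak\xpoly,\uvLie\rbrak\sub\xpoly$ rather than leaking into the rest of $\envalg$. This observation incidentally shows that $\xpoly$ is in fact a Lie ideal of $\xpoly\dirsum\uvLie$, with $\uvLie$ a complementary subalgebra.
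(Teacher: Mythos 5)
Your proposal is correct and follows essentially the same route as the paper: reduce to basis elements, observe the sum is direct because the two summands are spanned by disjoint subsets of the basis \eqref{Ubasis}, and check that the only nontrivial bracket, the mixed one, gives $\lbrak x^k,x^my\rbrak=kx^{k+m}\in\xpoly$. The only cosmetic difference is that you derive this from the reordering formula \eqref{induct1} while the paper cites \eqref{structLie}; your closing remark that $\xpoly$ is a Lie ideal of $\xpoly\dirsum\uvLie$ is a correct bonus observation not made in the paper.
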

\begin{proof} Showing that $\xpoly\dirsum\uvLie$ is closed under the Lie bracket will suffice. Because we have already shown that $\uvLie$ is a Lie subalgebra of $\envalg$, using the Lie algebra axioms, and the fact that $\xpoly$ is a commutative subalgebra of $\envalg$, we only need to show that the Lie bracket of an element of $\xpoly$ with an element of $\uvLie$ is an element of $\xpoly\dirsum\uvLie$. Because of the bilinearity of the Lie bracket, we only consider basis elements. Among the basis elements \eqref{Ubasis} of $\envalg$, those with $l=0$ form a basis for $\xpoly$, while those with $l=1$ form a basis for $\uvLie$.  Hence, the sum $\xpoly\dirsum\uvLie$ is indeed direct. Among these basis elements, let $a$ be a basis element of $\xpoly$, and $b$ be a basis element of $\uvLie$. Then there exist $k,m\in\Zplus$ such that $a=x^k$ and $b=x^my$. By the skew-symmetry of the Lie bracket, we only need to show $\lbrak a,b\rbrak\in\xpoly\dirsum\uvLie$. By \eqref{structLie}, we have
\begin{eqnarray}
\lbrak a,b\rbrak = \lbrak x^k,x^my\rbrak=kx^{k+m}\in\xpoly\dirsum\uvLie.\qedhere\nonumber
\end{eqnarray}
\end{proof}

For each $n\in\N$, define $\filter_n$ as the $\F$-linear span of all basis elements $x^ky^l$ of $\envalg$ from \eqref{Ubasis} such that $l\leq n$. As an immediate observation, note that $\filter_0=\xpoly$ and $\filter_1=\xpoly\dirsum\uvLie\dirsum\span\{y\}$.

If $\arbLie_1$ and $\arbLie_2$ are vector subspaces of a Lie algebra $\arbLie$ over $\F$, then by $\lbrak \arbLie_1,\arbLie_2\rbrak$ we mean the $\F$-linear span of all elements $\lbrak a,b\rbrak$ such that $a\in\arbLie_1$ and $b\in\arbLie_2$. A sequence $\lpar \arbLie_n\rpar_{n=0}^\infty$ of vector subspaces of $\arbLie$ is said to an \emph{$\N$-filtration} of $\arbLie$ if,  for any $p,q\in\N$, we have
\begin{eqnarray} 
\arbLie_p & \sub & \arbLie_{p+1},\nonumber\\
\lbrak\arbLie_p,\arbLie_q\rbrak & \sub & \arbLie_{p+q},\nonumber\\
\bigcup_{n=0}^\infty\arbLie_n & = & \arbLie.\nonumber
\end{eqnarray}
\begin{proposition} The sequence $\lpar\filter_n\rpar_{n=0}^\infty$ of vector subspaces of $\uvLie$ is an $\N$-filtration of $\uvLie$.
\end{proposition}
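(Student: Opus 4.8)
The plan is to verify directly the three defining conditions of an $\N$-filtration for the sequence $\lpar\filter_n\rpar_{n=0}^\infty$. First I would dispose of the two routine conditions. The nesting $\filter_p\sub\filter_{p+1}$ is immediate from the definition, since every basis element $x^ky^l$ from \eqref{Ubasis} with $l\leq p$ automatically satisfies $l\leq p+1$, so the spanning set of $\filter_p$ is contained in that of $\filter_{p+1}$. The exhaustion condition is likewise immediate: an arbitrary basis element $x^ky^l$ lies in $\filter_l$, so every basis element, and hence every element of $\envalg$, belongs to some $\filter_n$; thus $\bigcup_{n=0}^\infty\filter_n=\envalg$.

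The substantive step is the bracket condition $\lbrak\filter_p,\filter_q\rbrak\sub\filter_{p+q}$. By the bilinearity of the Lie bracket and the definition of $\lbrak\filter_p,\filter_q\rbrak$ as an $\F$-linear span, it suffices to check this on spanning elements: I would take $x^ky^l\in\filter_p$ (so $l\leq p$) and $x^my^n\in\filter_q$ (so $n\leq q$), and show $\lbrak x^ky^l,x^my^n\rbrak\in\filter_{p+q}$. Here the explicit structure constants \eqref{structLie} do all the work. Inspecting the two sums on its right-hand side, every term of the first sum is a scalar multiple of $x^{k+m}y^{t+n}$ with $0\leq t\leq l$, so its $y$-degree is at most $l+n$; every term of the second sum is a scalar multiple of $x^{k+m}y^{s+l}$ with $0\leq s\leq n$, so its $y$-degree is again at most $l+n$. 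Since $l\leq p$ and $n\leq q$ force $l+n\leq p+q$, each surviving summand is a basis element with $y$-degree not exceeding $p+q$, hence lies in $\filter_{p+q}$. Therefore the whole bracket lies in $\filter_{p+q}$, which completes the verification of all three conditions.

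The crucial feature driving the argument is that the commutator formula \eqref{structLie} never raises the $y$-degree above the sum of the two input $y$-degrees; this sub-additivity of the $y$-degree is precisely what the filtration inequality demands. I do not expect any genuine obstacle here: the proposition is elementary once \eqref{structLie} is in hand, and the only point requiring care is the bookkeeping of exponents\textemdash namely, using the index bounds $t\leq l$ and $s\leq n$ to bound both resulting $y$-degrees by $l+n$, and then invoking $l+n\leq p+q$.
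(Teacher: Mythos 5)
Your proof is correct and follows the same route as the paper's: the nesting and exhaustion conditions are read off from the definition of the $\filter_n$, and the bracket condition $\lbrak\filter_p,\filter_q\rbrak\sub\filter_{p+q}$ is extracted from the structure constants \eqref{structLie} by bounding the $y$-degrees $t+n\leq l+n$ and $s+l\leq l+n\leq p+q$; you have simply written out the bookkeeping the paper leaves implicit. Note also that your version of the exhaustion condition, $\bigcup_{n=0}^\infty\filter_n=\envalg$, is the correct one: the proposition as stated (and its proof) says $\uvLie$ where $\envalg$ is clearly intended, since $\filter_0=\xpoly$ is not even contained in $\uvLie$.
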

\begin{proof} Let $p,q\in\N$. The relations $\filter_p\sub\filter_{p+1}$ and $\displaystyle\bigcup_{n=0}^\infty\filter_n=\uvLie$ follow from the definition of the vector subspaces in  $\lpar\filter_n\rpar_{n=0}^\infty$, while the relation $\lbrak\filter_p,\filter_q\rbrak\sub\filter_{p+q}$ follows immediately from \eqref{structLie}.
\end{proof}

\section{A presentation for the Lie algebra $\uvLie$}

Our goal is to obtain a presentation for $\uvLie$ by generators and relations. We start with some computations and results in the free Lie algebra $\freeLie$. For each $n\in\Zplus$, we define
\begin{eqnarray}
\relcoeff_n & := & \frac{n!(n-1)!}{(2n+1)!}\in\Q,\nonumber\\
\infgen_n & := & \lreg vu^{n-1}vu^n\rreg + \relcoeff_n\lreg vu^{2n+1}\rreg\in\freeLie.\nonumber
\end{eqnarray}
Denote by $\infId$ the Lie ideal of $\freeLie$ generated by $\{\infgen_n\  :\  n\in\Zplus\}$. 

\begin{remark}\label{subRem} Since the relations \eqref{defrels} hold in $\uvLie$, for each $n\in\Zplus$, we have $\infgen_n\in\defId$. Consequently, $\infId\sub\defId$.
\end{remark}
 Towards the other set inclusion, we have a couple of lemmas.

\begin{lemma}\label{vumvunLem} For each $w\in\vumvun$, there exists $\beta\in\infId$ such that 
\begin{eqnarray}
\lreg w\rreg-\beta\in\spanQ\lpar\genset\cup\lreg\vun\rreg\rpar.
\end{eqnarray}
\end{lemma}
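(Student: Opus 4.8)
The plan is to argue by strong induction on the ``gap'' $g:=n_2-n_1\geq 1$ of a word $w=vu^{n_1}vu^{n_2}\in\vumvun$, reducing $\lreg w\rreg$ modulo $\infId$ to a $\Q$-linear combination of elements of $\lreg\vun\rreg$. Two closure facts will drive the whole argument. Since $\infId$ is a Lie ideal of $\freeLie$, we have $\lbrak\beta,u\rbrak\in\infId$ whenever $\beta\in\infId$. Moreover, because the only regular proper ending of $vu^{n+1}$ is $u$, its regular factoring is $vu^n\cdot u$, so $\lbrak\lreg vu^n\rreg,u\rbrak=\lreg vu^{n+1}\rreg$ (equivalently, this is \eqref{vmunbrak} with $m=1$); together with $\lbrak v,u\rbrak=\lreg vu\rreg$ and $\lbrak u,u\rbrak=0$ this shows that the operator $\lbrak\,\cdot\,,u\rbrak$ maps $\spanQ\lpar\genset\cup\lreg\vun\rreg\rpar$ into itself. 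These two properties are exactly what is needed to push a decomposition $\lreg w'\rreg=\beta'+r'$ (with $\beta'\in\infId$ and $r'\in\spanQ\lpar\genset\cup\lreg\vun\rreg\rpar$) through a bracket with $u$.

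For the base case $g=1$ we have $w=vu^{n-1}vu^{n}$ with $n:=n_1+1$, and the very definition of $\infgen_n$ gives $\lreg w\rreg=\infgen_n-\relcoeff_n\lreg vu^{2n+1}\rreg$; thus $\beta=\infgen_n\in\infId$ works. For the inductive step with $g\geq 3$, I would apply \eqref{vumvunind} with $m=n_1$ and $n=n_2-1$ (legitimate since $n_1\leq n_2-3$) to obtain
\[
\lreg w\rreg=\lbrak\lreg vu^{n_1}vu^{n_2-1}\rreg,u\rbrak-\lreg vu^{n_1+1}vu^{n_2-1}\rreg .
\]
Both words on the right lie in $\vumvun$ and have gap strictly less than $g$, so the induction hypothesis decomposes each as $\beta_i+r_i$. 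Combining the two closure facts, $\lbrak\beta_1,u\rbrak-\beta_2\in\infId$ and $\lbrak r_1,u\rbrak-r_2\in\spanQ\lpar\genset\cup\lreg\vun\rreg\rpar$, which proves the claim for $w$.

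The remaining case is $g=2$, i.e.\ $w=vu^{n-1}vu^{n+1}$ with $n=n_1+1$. For $n\geq 2$, \eqref{nminus1} gives $\lreg w\rreg=\lbrak\lreg vu^{n-1}vu^{n}\rreg,u\rbrak$; substituting the gap-$1$ expression for $\lreg vu^{n-1}vu^n\rreg$ and using $\lbrak\lreg vu^{2n+1}\rreg,u\rbrak=\lreg vu^{2n+2}\rreg$ yields $\lreg w\rreg=\lbrak\infgen_n,u\rbrak-\relcoeff_n\lreg vu^{2n+2}\rreg$, so $\beta=\lbrak\infgen_n,u\rbrak$ works. The one genuinely exceptional case, which I expect to need the most care, is $n=1$, i.e.\ $w=v^2u^2$, since \eqref{nminus1} is stated only for $n\geq 2$. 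Here I would check by a short direct computation in $\freeLie$ — the span of the Lie elements of multidegree $(2,2)$ is one-dimensional, generated by $\lreg v^2u^2\rreg$ — that $\lbrak\lreg v^2u\rreg,u\rbrak=\lreg v^2u^2\rreg$, whence $\lbrak\infgen_1,u\rbrak=\lreg v^2u^2\rreg+\relcoeff_1\lreg vu^4\rreg$ and $\beta=\lbrak\infgen_1,u\rbrak$ again works.

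The induction itself is routine; the parts requiring attention are the two closure facts (that bracketing with $u$ preserves $\infId$ and $\spanQ\lpar\genset\cup\lreg\vun\rreg\rpar$) and the boundary word $v^2u^2$ lying outside the stated range of \eqref{nminus1}. Once these are in place, every $w\in\vumvun$ is handled.
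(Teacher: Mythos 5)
Your proof is correct, and it reaches the conclusion by a genuinely different induction scheme than the paper's, although it runs on the same recursion identities. The paper inducts on the second exponent $n_2$, splitting into the sub-cases $m\le n-2$, $m=n-1$, $m=n$, and runs a separate induction for the words $v^2u^n$ (the case $m=0$) via \eqref{vvunind}; you instead induct on the gap $n_2-n_1$, which absorbs the $m=0$ words into the general argument and makes the base of the recursion (gap $1$, i.e.\ the defining elements $\infgen_n$ themselves) transparent. The only formal adjustment needed is that for $n_1=0$ the identity you quote as \eqref{vumvunind} is, as the paper states it, only asserted for $m\in\Zplus$; the $m=0$ instance is exactly \eqref{vvunind} under the identification $vu^0=v$, so nothing is lost. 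Your two ``closure facts'' are precisely what the paper uses implicitly when it writes $\lbrak\beta_1,u\rbrak-\beta_2\in\infId$ and converts $\lbrak\lreg vu^{h_s}\rreg,u\rbrak$ into $\lreg vu^{h_s+1}\rreg$, so making them explicit is a clarification rather than a detour. A genuine merit of your organization is that it forces you to notice the boundary word $v^2u^2$, which lies outside the stated ranges of both \eqref{nminus1} and \eqref{vvunind}; your direct verification $\lbrak\lreg v^2u\rreg,u\rbrak=\lreg v^2u^2\rreg$ (a one-line Jacobi computation, consistent with the multidegree-$(2,2)$ component of $\freeLie$ being one-dimensional) handles it cleanly, whereas the paper's $m=0$ induction, stepping from $v^2u^n$ to $v^2u^{n+1}$ by \eqref{vvunind}, is silent on the step from $n=1$ to $n=2$. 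In short: same identities, different and arguably tidier bookkeeping, with one boundary case treated more carefully than in the paper.
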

\begin{proof} Let $w\in\vumvun$. Then there exist $m,n\in\N$ with $n\geq 1$ such that $w=vu^mvu^n$. We first deal with the case $m\neq 0$, and here we use induction on $n$. The smallest possible value of $n$ in such a case is $n=2$, and for $vu^mvu^n$ to be regular, the only possibility is that $m=1$. Using the definition of $\alpha_2$, we have
\begin{eqnarray} 
\lreg vuvu^2\rreg = \infgen_2 - a_2\lreg vu^5\rreg,
\end{eqnarray}
where $\infgen_2\in\infId$ and $a_2\lreg vu^5\rreg\  \in\  \spanQ\lpar\genset\cup\lreg\vun\rreg\rpar$. Suppose that for some $n\in\Zplus$, all regular words of the form $vu^mvu^n$ satisfy the statement. We proceed with taking sub-cases depending on the value of $m$. First, we consider the case $m\leq n-2$. In this case, both $m$ and $m+1$ are strictly less than $n$. Then the words $vu^mvu^n$ and $vu^{m+1}vu^n$ are regular and both satisfy the inductive hypothesis. Thus, there exist $\beta_1,\beta_2\in\infId$, some $c_1,c_2,\ldots,c_p,e_1,e_2,\ldots,e_q\in\Q$, and some $h_1,h_2,\ldots,h_p,k_1,k_2,\ldots,k_q\in\Zplus$ such that 
\begin{eqnarray}
\lreg vu^mvu^n\rreg & = & \beta_1 + \sum_{s=1}^pc_s\lreg vu^{h_s}\rreg,\label{nind1}\\
\lreg vu^{m+1}vu^n\rreg & = & \beta_2 + \sum_{t=1}^qe_t\lreg vu^{k_t}\rreg. \label{nind2}
\end{eqnarray}
We substitute \eqref{nind1} and \eqref{nind2} into \eqref{vumvunind}. In the resulting equation, Lie polynomials of the form $\lbrak\lreg vu^{h_s}\rreg,u\rbrak$ appear. Regarding these Lie polynomials, observe that the regular factoring of the regular word $vu^{h_s+1}$ is $vu^{h_s+1}=vu^{h_s}\cdot u$, and so $\lbrak\lreg vu^{h_s}\rreg,u\rbrak=\lreg vu^{h_s+1}\rreg$. From these observations we get the simplified equation
\begin{eqnarray}
\lreg vu^mvu^{n+1}\rreg -\lpar \lbrak\beta_1, u\rbrak -\beta_2 \rpar & = & \sum_{s=1}^pc_s\lreg vu^{h_s+1}\rreg - \sum_{t=1}^qe_t\lreg vu^{k_t}\rreg,\label{similareq}
\end{eqnarray}
where $ \lbrak\beta_1, u\rbrak -\beta_2 \in\infId$, and the right-hand side of \eqref{similareq} is in $\spanQ\lpar\genset\cup\lreg\vun\rreg\rpar$. If $m=n-1$, then we substitute \eqref{nind1} into \eqref{nminus1}, and we obtain
\begin{eqnarray}
\lreg vu^mvu^{n+1}\rreg - \lbrak\beta_1, u\rbrak  & = & \sum_{s=1}^pc_s\lreg vu^{h_s+1}\rreg\quad\in\quad\spanQ\lpar\genset\cup\lreg\vun\rreg\rpar,\nonumber
\end{eqnarray}
where $\lbrak\beta_1, u\rbrak\in\infId$. If $m=n$, then we simply use the definition of $\infgen_{n+1}$ to obtain
\begin{eqnarray}
\lreg vu^mvu^{n+1}\rreg - \infgen_{n+1} & = & -\relcoeff_{n+1}\lreg vu^{2n+3}\rreg\quad\in\quad\spanQ\lpar\genset\cup\lreg\vun\rreg\rpar.\nonumber
\end{eqnarray}
This completes the induction for the case $m\neq 0$. Suppose $m=0$. The smallest possible value of $n$ is $1$, and from the definition of the generators of $\infId$, we have $\lreg v^2u\rreg - \infgen_1 = a_1\lreg vu^3\rreg\  \in\  \spanQ\lpar\genset\cup\lreg\vun\rreg\rpar$. Suppose the statement holds for some $n\in\Zplus$ with $m=0$. Then there exists $\gamma_1\in\infId$, some $f_1,f_2,\ldots,f_P\in\Q$, and some $m_1,m_2,\ldots,m_P\in\Zplus$ such that 
\begin{eqnarray}
\lreg v^{2}u^n\rreg & = & \gamma_1 + \sum_{s=1}^Pf_s\lreg vu^{m_s}\rreg. \label{nind3}
\end{eqnarray}
Since we have proven the statement for the case $m\neq 0$, which is applicable to the regular word $vuvu^n$, there exists $\gamma_2\in\infId$,  some $g_1,g_2,\ldots,g_Q\in\Q$, and some $n_1,n_2,\ldots,n_Q\in\Zplus$ such that 
\begin{eqnarray}
\lreg vuvu^n\rreg & = & \gamma_2 + \sum_{t=1}^Qg_s\lreg vu^{n_s}\rreg. \label{nind4}
\end{eqnarray}
Similar to the proof for the case $m\neq 0$ with $m\leq n-2$, we substitute \eqref{nind3} and \eqref{nind4} into \eqref{vvunind}. We get an equation similar to \eqref{similareq}, which asserts that 
\begin{eqnarray}
\lreg v^2u^{n+1}\rreg-\lpar\lbrak\gamma_1,u\rbrak-\gamma_2\rpar\quad\in\quad\spanQ\lpar\genset\cup\lreg\vun\rreg\rpar,\nonumber
\end{eqnarray}
where $\lbrak\gamma_1,u\rbrak-\gamma_2\in\infId$.
\end{proof}
Our next step is to extend Lemma~\ref{vumvunLem} from $\vumvun$ to $\regBasis$.

\begin{lemma}\label{vunLem} For each $w\in\regBasis$, there exists $\beta\in\infId$ such that 
\begin{eqnarray}
\lreg w\rreg-\beta\in\spanQ\lpar\genset\cup\lreg\vun\rreg\rpar.
\end{eqnarray}
\end{lemma}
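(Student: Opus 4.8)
The plan is to argue by strong induction on the length $|w|$. When $|w|=1$ the word $w$ is a generator, so $\lreg w\rreg\in\genset\sub\spanQ\lpar\genset\cup\lreg\vun\rreg\rpar$ and $\beta=0$ works. So suppose $|w|\geq 2$ and that the claim holds for every regular word of length less than $|w|$. Let $w=cd$ be the regular factoring of $w$; then $c,d\in\regBasis$, both of length strictly less than $|w|$, and $\lreg w\rreg=\lbrak\lreg c\rreg,\lreg d\rreg\rbrak$ by the definition of the bracketing. Applying the inductive hypothesis to $c$ and to $d$, I would write $\lreg c\rreg=\beta_c+\rho_c$ and $\lreg d\rreg=\beta_d+\rho_d$ with $\beta_c,\beta_d\in\infId$ and $\rho_c,\rho_d\in\spanQ\lpar\genset\cup\lreg\vun\rreg\rpar$.

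Expanding the bracket bilinearly gives
\begin{eqnarray}
\lreg w\rreg=\lbrak\beta_c,\beta_d\rbrak+\lbrak\beta_c,\rho_d\rbrak+\lbrak\rho_c,\beta_d\rbrak+\lbrak\rho_c,\rho_d\rbrak.\nonumber
\end{eqnarray}
Because $\infId$ is a Lie ideal of $\freeLie$, each of the first three summands lies in $\lbrak\infId,\freeLie\rbrak\sub\infId$, so the entire burden falls on the single cross term $\lbrak\rho_c,\rho_d\rbrak$, which I must show lies in $\infId+\spanQ\lpar\genset\cup\lreg\vun\rreg\rpar$. The key observation here is that, since $u=\lreg u\rreg$ and $v=\lreg v\rreg$, the generating set $\genset\cup\lreg\vun\rreg$ equals $\lreg\genset\cup\vun\rreg$. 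Hence $\rho_c$ and $\rho_d$ are $\Q$-linear combinations of bracketings $\lreg w_1\rreg$ and $\lreg w_2\rreg$ with $w_1,w_2\in\genset\cup\vun$, and expanding $\lbrak\rho_c,\rho_d\rbrak$ bilinearly produces a $\Q$-linear combination of brackets $\lbrak\lreg w_1\rreg,\lreg w_2\rreg\rbrak$ with $w_1,w_2\in\genset\cup\vun$ --- precisely the hypothesis of Lemma~\ref{skewvumvunLem}.

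That lemma rewrites each such bracket as $\varepsilon\lreg w'\rreg$ with $\varepsilon\in\{-1,0,1\}$ and $w'\in\vumvun$ (should a degenerate output $w'\in\vun$ arise, its bracketing already lies in the target span, so that case is even easier). Thus $\lbrak\rho_c,\rho_d\rbrak$ is a $\Q$-linear combination of elements $\lreg w'\rreg$ with $w'\in\vumvun$, and Lemma~\ref{vumvunLem} replaces each $\lreg w'\rreg$ by a member of $\infId$ plus a member of $\spanQ\lpar\genset\cup\lreg\vun\rreg\rpar$. Since $\Q\sub\F$ and $\infId$ is an $\F$-subspace, the $\Q$-combination of the resulting $\infId$-parts again lies in $\infId$. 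Collecting every $\infId$-contribution --- the three ideal summands above together with the ideal part extracted from the cross term --- into a single element $\beta\in\infId$ yields $\lreg w\rreg-\beta\in\spanQ\lpar\genset\cup\lreg\vun\rreg\rpar$, closing the induction. The conceptual heart of the argument is this reduction of the cross term through Lemma~\ref{skewvumvunLem} and then Lemma~\ref{vumvunLem}; everything else is the routine verification that the ideal $\infId$ absorbs the remaining terms, and I expect the one point needing care to be the verification that both arguments of every bracket in $\lbrak\rho_c,\rho_d\rbrak$ genuinely lie in $\genset\cup\vun$, which the identity $\genset\cup\lreg\vun\rreg=\lreg\genset\cup\vun\rreg$ guarantees.
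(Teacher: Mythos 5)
Your proposal is correct and follows essentially the same route as the paper's own proof: induction on $|w|$ via the regular factoring $w=cd$, absorbing the terms involving $\beta_c,\beta_d$ into the ideal $\infId$, reducing the cross term $\lbrak\rho_c,\rho_d\rbrak$ to brackets $\lbrak\lreg w_1\rreg,\lreg w_2\rreg\rbrak$ with $w_1,w_2\in\genset\cup\vun$ handled by Lemma~\ref{skewvumvunLem}, and then applying Lemma~\ref{vumvunLem} to the resulting $\vumvun$ bracketings. Your parenthetical about degenerate outputs landing in $\vun$ even patches a small imprecision in the statement of Lemma~\ref{skewvumvunLem}, which the paper instead handles by enumerating the cases \eqref{invun1}--\eqref{invumvun2} explicitly.
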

\begin{proof} Let $w\in\regBasis$. We use induction on $|w|$. If $|w|=1$, then take $\beta=0\in\infId$ and use the fact that $\lreg w\rreg=w\in\genset$. Suppose that for some $n\in\Zplus$, all regular words of length strictly less than $n$ satisfy the statement. Let $w$ be any regular word with $|w|=n$. If $w=ab$ is the regular factoring of $w$, then using the inductive hypothesis, we write the regular words $a$ and $b$ as
\begin{eqnarray}
\lreg a\rreg - \beta_1 & = & \sum_{s=1}^p c_s\lreg x_s\rreg,\label{beginreg}\\
\lreg b\rreg - \beta_2 & = & \sum_{t=1}^q e_t\lreg y_t\rreg,\label{endreg}
\end{eqnarray}
for some $\beta_1,\beta_2\in\infId$, where $c_s,e_t\in\Q$ and $x_s,y_t\in\genset\cup\vun$ for all $s,t$. Substituting \eqref{beginreg} and \eqref{endreg} into the equation $\lreg w\rreg=\lbrak\lreg a\rreg,\lreg b\rreg\rbrak$, and using the bilinearity of the Lie bracket, we obtain
\begin{eqnarray}
\lreg w\rreg-\beta=\sum_{s=1}^p\sum_{t=1}^q\lpar c_se_t\rpar\lbrak\lreg x_s\rreg,\lreg y_t\rreg\rbrak,\label{weq}
\end{eqnarray}
where $\beta = \lbrak\beta_1,\beta_2\rbrak + \lbrak\beta_1,\sum_{t=1}^q e_t\lreg y_t\rreg\rbrak + \lbrak \sum_{s=1}^p c_s\lreg x_s\rreg,\beta_2\rbrak\  \in\  \infId$. Consider an arbitrary term $\lpar c_se_t\rpar\lbrak\lreg x_s\rreg,\lreg y_t\rreg\rbrak$ in the right-hand side of \eqref{weq}. The scalar coefficient is in $\Q$. By a routine check on all the possible cases, the Lie polynomial $\lbrak\lreg x_s\rreg,\lreg y_t\rreg\rbrak$ is one of
\begin{eqnarray}
\lbrak u,v\rbrak,\quad\lbrak v,u\rbrak,\label{invun1}\\
\lbrak u,\lreg vu^{n_2}\rreg\rbrak,\quad\lbrak\lreg vu^{n_1}\rreg,u\rbrak,\label{invun2}\\
\lbrak v,\lreg vu^{n_2}\rreg\rbrak,\quad\lbrak\lreg vu^{n_1}\rreg,v\rbrak,\label{invumvun1}\\
\lbrak\lreg vu^{n_1}\rreg,\lreg vu^{n_2}\rreg\rbrak,\label{invumvun2}
\end{eqnarray}
for some $n_1,n_2\in\Zplus$. By some routine computations using the skew-symmetry of the Lie bracket and the properties of the bracketing of elements of $\vun$ and of $\vumvun$, each Lie polynomial of the form \eqref{invun1} or \eqref{invun2} is equal to $\varepsilon z_1$ for some $\varepsilon\in\{-1,1\}$ and some $z_1\in\vun$, and that each Lie polynomial of the form \eqref{invumvun1} is equal to $\delta z_2$ for some $\delta\in\{-1,1\}$ and some $z_2\in\vumvun$. By Lemma~\ref{skewvumvunLem}, each Lie polynomial of the form \eqref{invumvun2} is equal to $\eta z_3$ for some $\eta\in\{-1,0,1\}$ and some $z_3\in\vumvun$. By these observations about \eqref{invun1} to \eqref{invumvun2}, we can rewrite \eqref{weq} as $\lreg w\rreg-\beta=w_1+w_2$ for some $w_1\in\spanQ\lreg \vun\rreg$ and some $w_2\in\spanQ\lreg\vumvun\rreg$. But  by Lemma~\ref{vumvunLem}, there exists $\gamma\in\infId$ such that $w_2=\gamma + w_3$ for some $w_3\in\spanQ\lreg \vun\rreg$. Then $\lreg w\rreg - (\beta+\gamma)=w_1+w_3\in\spanQ\lreg \vun\rreg$, where $\beta+\gamma\in\infId$. This completes the induction.
\end{proof}

\begin{theorem} The Lie algebra $\uvLie$ has a presentation with generators $u,v$ and relations $\infgen_n=0$ for all $n\in\Zplus$.
\end{theorem}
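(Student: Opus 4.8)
The plan is to recast the assertion as the ideal equality $\defId=\infId$ and then prove it by a spanning-plus-independence argument. Since $\uvLie=\freeLie/\defId$ and a presentation with generators $u,v$ and relators $\infgen_n$ is by definition the quotient $\freeLie/\infId$ (here $\freeLie$ is the free Lie algebra on $\genset=\{u,v\}$), the theorem is equivalent to $\defId=\infId$. Remark~\ref{subRem} already furnishes $\infId\sub\defId$, so the entire task reduces to the reverse inclusion $\defId\sub\infId$. Rather than attack that inclusion head-on, I would introduce the natural surjection $\canmap:\freeLie/\infId\to\freeLie/\defId=\uvLie$, which is well-defined precisely because $\infId\sub\defId$, and show that $\canmap$ is injective; its kernel is $\defId/\infId$, so injectivity is exactly $\defId=\infId$.

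First I would upgrade Lemma~\ref{vunLem} from a statement about a single bracketing into a spanning statement for the whole quotient. Since $\lreg\regBasis\rreg$ is an $\F$-basis of $\freeLie$, every $f\in\freeLie$ is a finite combination $f=\sum_i\lambda_i\lreg w_i\rreg$ with $\lambda_i\in\F$ and $w_i\in\regBasis$. Applying Lemma~\ref{vunLem} to each $\lreg w_i\rreg$ and using that $\infId$ is an $\F$-subspace, one gets $f\equiv\sum_i\lambda_i(\lreg w_i\rreg-\beta_i)\pmod{\infId}$ with each $\lreg w_i\rreg-\beta_i\in\spanQ(\genset\cup\lreg\vun\rreg)$. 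Hence, modulo $\infId$, every element of $\freeLie$ is an $\F$-combination of $\genset\cup\lreg\vun\rreg$, so the images of $\{u,v\}\cup\{\lreg vu^n\rreg:n\geq 1\}$ span $\freeLie/\infId$ over $\F$.

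Next I would project down through $\canmap$. The spanning set $\genset\cup\lreg\vun\rreg$ maps to $\{u\}\cup\{\lreg vu^n\rreg:n\in\N\}$ (recalling $v=\lreg vu^0\rreg$), which is precisely the $\F$-basis \eqref{uvLieBasis} of $\uvLie$ and is therefore linearly independent. The endgame is then the elementary linear-algebra fact that a spanning set which maps, under a linear surjection, onto a linearly independent set forces that surjection to be injective: any $z\in\ker\canmap$ is a finite $\F$-combination of the spanning images, and applying $\canmap$ makes all its coefficients vanish by independence downstairs, whence $z=0$. Thus $\canmap$ is an isomorphism and $\defId=\infId$, which is the presentation claim.

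The substantive obstacle has in fact already been cleared in Lemmas~\ref{skewvumvunLem}, \ref{vumvunLem}, and \ref{vunLem}, which carry out the real work of collapsing every bracketing, modulo $\infId$, onto $\spanQ(\genset\cup\lreg\vun\rreg)$. For the theorem itself the only delicate points are bookkeeping: passing from the $\Q$-span of Lemma~\ref{vunLem} to an $\F$-span of the quotient (legitimate because $\infId$ is closed under $\F$-scalars and $\lreg\regBasis\rreg$ is an $\F$-basis of $\freeLie$), and correctly matching the spanning set $\genset\cup\lreg\vun\rreg$ against the known basis \eqref{uvLieBasis} of $\uvLie$ so that linear independence downstairs can be invoked to conclude injectivity of $\canmap$.
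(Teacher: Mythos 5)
Your proposal is correct and is essentially the paper's own argument: both reduce the theorem to $\defId\sub\infId$ via Remark~\ref{subRem}, use Lemma~\ref{vunLem} to reduce every element of $\freeLie$ modulo $\infId$ to an $\F$-combination of $\genset\cup\lreg\vun\rreg$, and then invoke the linear independence of the basis \eqref{uvLieBasis} in $\uvLie$ to kill the coefficients. Your packaging via injectivity of the induced surjection $\canmap:\freeLie/\infId\rightarrow\uvLie$ is just a mild rephrasing of the paper's direct computation with an arbitrary $w\in\defId$.
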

\begin{proof} Showing that the elements in $\{\infgen_n\  :\  n\in\Zplus\}$ generate the defining Lie ideal $\defId$ of $\uvLie$, or equivalently $\infId=\defId$, will suffice. In view of Remark~\ref{subRem}, we only need to show $\defId\sub\infId$. Let $w\in\defId$. We write $w$ as an $\F$-linear combination of the basis elements of $\freeLie$ from $\lreg\regBasis\rreg$, and to each basis element that is not in $\lreg\vun\rreg$ appearing in the linear combination we apply Lemma~\ref{vunLem}. The result is
\begin{eqnarray}
w-\beta = \sum_{s=1}^pc_s\lreg vu^{n_s}\rreg\label{zerocoeff}
\end{eqnarray}
for some $\beta\in\infId$, where $c_s\in\F$ for all $s$. Applying the canonical Lie algebra homomorphism $\freeLie\rightarrow\uvLie$ on both sides of \eqref{zerocoeff}, we obtain $0=\sum_{s=1}^pc_s\lreg vu^{n_s}\rreg$ where $\lreg vu^{n_s}\rreg$ is a basis element of $\uvLie$ from \eqref{uvLieBasis} for any $s$. Then $c_s=0$ for any $s$, and substituting these values to \eqref{zerocoeff}, we get $w=\beta\in\infId$.
\end{proof}


\bibliography{mybibfile}

\end{document}